\newtheorem{dfs}{Definition}[section]
\newtheorem{thms}[dfs]{Theorem}
\newtheorem{props}[dfs]{Proposition}
\newtheorem{cors}[dfs]{Corollary}
\newtheorem{conjs}[dfs]{Conjecture}
\newtheorem*{thm*}{Theorem}
\title[Characterizing classifiable AH algebras]
{Characterizing classifiable AH algebras}
\address{Department of Mathematics \\ Purdue University \\ 150 N. University St. \\ West Lafayette IN \\ USA \\ 47907 }
\email{atoms@purdue.edu}
\begin{document}

\maketitle

\begin{quote}
{\bf Abstract.}  We observe almost divisibility for the original Cuntz semigroup of a simple AH algebra with strict comparison.  As a consequence, the properties of strict comparison, finite nuclear dimension, and $\mathcal{Z}$-stability are equivalent for such algebras, confirming partially a conjecture of Winter and the author.  \\

\vspace{3mm}
\noindent
{\bf R\'esum\'e.}   Nous observons presque-divisibilit\'e pour le semigroupe de Cuntz original d'un alg\`ebre AH simple avec la comparaison stricte. Comme cons\'equence, les propri\'et\'es de comparaison stricte, la dimension nucl\'eaire finie et la Z-stabilit\'e sont \'equivalentes pour un tel alg\`ebre, confirmant partiellement une conjecture de Winter et l'auteur.
\end{quote}

\section{introduction}
Kirchberg proved in 1994 that tensorial absorption of $\mathcal{O}_\infty$ and pure infiniteness were equivalent for simple separable nuclear C$^*$-algebras.  This theorem can be reinterpreted as the equivalence of $\mathcal{Z}$-stability and strict comparison for simple separable nuclear traceless C$^*$-algebras (see \cite{Ro}), an equivalence that can be considered in stably finite algebras, too.  This and the results of \cite{TW} led to the following conjecture:

\begin{conjs}[T-Winter, 2008]\label{conjecture}
Let $A$ be a simple unital nuclear separable C$^*$-algebra.  The following are equivalent:
\begin{enumerate}
\item[(i)] $A$ has finite nuclear dimension;
\item[(ii)] $A$ is $\mathcal{Z}$-stable;
\item[(iii)] $A$ has strict comparison of positive elements.
\end{enumerate}
\end{conjs}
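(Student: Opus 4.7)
The plan is to close the triangle of implications by combining several distinct techniques, since no single argument yields all three arrows at once. Two of the three directions are already essentially available in the literature at the time of writing: R\o rdam has shown that tensorial absorption of $\mathcal{Z}$ forces strict comparison of positive elements, giving (ii)~$\Rightarrow$~(iii), and Winter's decomposition machinery produces a $\mathcal{Z}$-absorbing factorization from a finite nuclear dimension bound, giving (i)~$\Rightarrow$~(ii). The substantive content of the conjecture is therefore the two reverse arrows (iii)~$\Rightarrow$~(ii) and (ii)~$\Rightarrow$~(i), both of which ask one to extract \emph{internal} regularity, either matricial structure witnessing $\mathcal{Z}$-absorption or finite-colour decompositions witnessing a nuclear dimension bound, from purely \emph{external} ordered data such as traces and Cuntz comparison.

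For (iii)~$\Rightarrow$~(ii), I would follow the blueprint that the body of the paper carries out in the AH setting. The first step is to upgrade strict comparison to almost unperforation of the Cuntz semigroup, which is essentially tautological, and then to almost divisibility, which is the central technical observation of this paper. With both properties in hand one tries to construct unital $^*$-homomorphisms from prime dimension drop algebras $\mathcal{Z}_{p,q}$ into the central sequence algebra $A_\omega\cap A'$; the existence of such embeddings is known to characterize $\mathcal{Z}$-stability by work of R\o rdam and Winter. The construction naturally splits into a trace-data piece, controlled by strict comparison, and an internal matricial piece, controlled by almost divisibility.

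For (ii)~$\Rightarrow$~(i) I would adapt Winter's technology for nuclear dimension bounds on $\mathcal{Z}$-stable algebras. Starting from the identification $A\cong A\otimes\mathcal{Z}$ and the presence of matrix blocks $M_p,M_q$ inside dimension drop subalgebras of $\mathcal{Z}$, one builds a family of order-zero contractions from matrix algebras into $A$ whose ranges can be coloured in finitely many classes with mutual orthogonality inside each class, producing the required covering bound. Passing from Kirchberg-Phillips style approximations to a genuinely finite-colour decomposition without further structural hypotheses is the delicate part.

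The main obstacle, and the reason this statement remains a conjecture in the generality above rather than a theorem, is the production of almost divisibility in the Cuntz semigroup from strict comparison alone when no inductive-limit presentation is available. In the AH case the homogeneous summands supply an explicit reservoir of matrix units, and the proof in the body of the paper exploits precisely these to slice positive elements into nearly equal Cuntz pieces; for a general simple unital nuclear separable algebra one would need either to extract such building blocks abstractly, perhaps via a refined analysis of the trace simplex and the continuous rank functions it induces on the Cuntz semigroup, or to find a genuinely new source of internal cutting. I expect this step to be the hardest by a considerable margin, and any complete resolution of the conjecture in the unital nuclear case will likely have to confront it head-on.
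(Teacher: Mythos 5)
You have correctly diagnosed the logical architecture: (i)$\Rightarrow$(ii) and (ii)$\Rightarrow$(iii) are imported from Winter--Robert and R\o rdam, the substantive new content is (iii)$\Rightarrow$(ii), and the statement is established in this paper only for AH algebras precisely because almost divisibility of $W(A)$ cannot yet be extracted from strict comparison in general. Two points of comparison with the paper's actual route are worth recording. First, the mechanism for almost divisibility in the AH case is not a direct slicing of positive elements by matrix units harvested from the homogeneous building blocks; the AH structure enters only through the Brown--Perera--Toms theorem \cite[Theorem 5.3]{bpt} that the rank functions $\tau \mapsto d_\tau(a)$ are uniformly dense in $\mathrm{Aff}(\mathrm{T}(A))$, after which Proposition \ref{dense} is an entirely abstract argument: realize a suitable bounded strictly positive lower semicontinuous affine $g$ with $ng < d_\tau(x) < (n+1)g$ as $d_\tau(y)$ for $y$ a supremum of an increasing sequence in $W(A \otimes \mathcal{K})$ (via \cite{cei}), apply strict comparison, and pull the divisor back into $\mathrm{M}_\infty(A)$ using \cite{5guys}. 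So your proposed ``refined analysis of the trace simplex and the continuous rank functions it induces'' is in fact exactly the paper's method, and the obstruction in the general nuclear case is precisely the density statement, not the supremum argument. Second, the closing arrow (ii)$\Rightarrow$(i) is obtained here from \cite[Corollary 6.7]{W}, which needs locally finite nuclear dimension as an input; this holds automatically for AH algebras but is an unverified hypothesis for a general simple unital nuclear separable algebra, so that arrow is likewise open beyond such classes, as you suspected. With those two clarifications your outline is faithful to the paper, and like the paper it correctly stops short of claiming a proof of the conjecture itself.
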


\noindent
It is expected that these conjecturally equivalent conditions will characterize those algebras which are determined up to isomorphism by their Elliott invariants.  In the absence of even the weakest condition, (iii), one cannot classify AH algebras using only the Elliott invariant (\cite{T3}).  While it is possible that this can be corrected with an enlarged invariant, the jury is still out.  Combining the main result of \cite{W} with that of \cite{R} yields (i) $\Rightarrow$ (ii), while R\o rdam proves (ii) $\Rightarrow$ (iii) in \cite{Ro}.  This note yields the following result.
\begin{thms}\label{main}
Conjecture \ref{conjecture} holds for AH algebras.
\end{thms}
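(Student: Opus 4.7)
The plan is to close the cycle of implications. Since the excerpt attributes (i)$\Rightarrow$(ii) to \cite{W,R} and (ii)$\Rightarrow$(iii) to \cite{Ro}, my task reduces to proving the reverse implication (iii)$\Rightarrow$(ii) in the AH setting. The strategy announced in the abstract is to first establish almost divisibility of the original Cuntz semigroup $W(A)$, and then invoke existing $\mathcal{Z}$-stability machinery.

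The heart of the argument is the \emph{almost divisibility} step: for every $a \in M_n(A)_+$ and every integer $m$, I will produce $b$ with
\[
m\langle b\rangle \;\leq\; \langle a\rangle \;\leq\; (m+1)\langle b\rangle \qquad \text{in } W(A).
\]
Writing $A = \varinjlim(A_i,\varphi_i)$ with $A_i = p_i(C(X_i)\otimes M_{k_i})p_i$, I would approximate $a$ by some $a_i \in A_i$ and exploit the elementary divisibility of matrix algebras: in $M_{k_i}$, any rank-$r$ projection contains $m$ pairwise orthogonal copies of a rank-$\lfloor r/m\rfloor$ projection. Trivialising the relevant vector bundles on small neighbourhoods of $X_i$ and patching by a partition of unity would yield a global $b_i \in A_i$ satisfying
\[
m\cdot\mathrm{rank}(b_i)(x) \;\leq\; \mathrm{rank}(a_i)(x) \;\leq\; (m+1)\cdot\mathrm{rank}(b_i)(x)
\]
for every $x \in X_i$. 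This pointwise control translates into $m\,d_\tau(b) \leq d_\tau(a) \leq (m+1)\,d_\tau(b)$ for every lower-semicontinuous dimension function $d_\tau$ associated with a trace $\tau$ on $A$, and strict comparison (after the usual $\varepsilon$-cutdown of $a$) then upgrades these tracial inequalities to Cuntz comparisons, yielding the required almost divisibility.

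Combined with strict comparison, almost divisibility makes $W(A)$ \emph{pure} in the sense of Winter. Since each building block $A_i$ is subhomogeneous, an AH algebra automatically has locally finite nuclear dimension; Winter's theorem that a simple unital separable nuclear C*-algebra of locally finite nuclear dimension with pure Cuntz semigroup is $\mathcal{Z}$-stable then applies, giving (iii)$\Rightarrow$(ii). The full equivalence in the AH class follows by appealing to (ii)$\Rightarrow$(i) in this setting, which is already known via Winter's decomposition-rank results combined with slow dimension growth in the $\mathcal{Z}$-stable AH case.

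The main obstacle is the construction of the almost-divisor $b_i$ inside the twisted subhomogeneous block $A_i$: the corner projection $p_i$ and the possible nontriviality of vector bundles over $X_i$ prevent any naive matrix-unit argument from working globally. Here strict comparison is what makes the plan feasible - it lets me relax the requirement on $b_i$ from being an exact Cuntz-theoretic divisor to being only approximately correct in pointwise rank, so that the local-to-global discrepancies can be absorbed into the tolerance provided by tracial comparison.
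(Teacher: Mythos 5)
Your overall architecture matches the paper's: establish almost divisibility of $W(A)$, combine it with strict comparison and locally finite nuclear dimension, feed this into Winter's theorem to obtain (iii) $\Rightarrow$ (ii), and quote the remaining implications from \cite{W}, \cite{R} and \cite{Ro}. The divergence --- and the gap --- lies in how you produce the almost-divisor $b$. The paper makes no attempt at a direct construction inside a finite stage $A_i$: it quotes the density of dimension functions for simple unital AH algebras (\cite[Theorem 5.3]{bpt}), realizes an exactly prescribed bounded lower semicontinuous strictly positive affine function $g$ as $d_\tau(y)$ by taking the supremum of an increasing sequence in $W(A\otimes\mathcal{K})$ via \cite[Theorem 1]{cei}, applies strict comparison to the \emph{strict} inequalities $ng < h < (n+1)g$, and then descends from $W(A\otimes\mathcal{K})$ back to $W(A)$ using \cite[Theorem 4.4.1]{5guys}.

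Your local-to-global construction of $b_i$ would fail as described. First, patching by a partition of unity does not control rank: at a point $x$ where several $\phi_\alpha$ are nonzero, $\mathrm{rank}\bigl(\sum_\alpha \phi_\alpha(x)b_\alpha(x)\bigr)$ can be as large as the sum of the local ranks, and arranging the local divisors to have compatible (nested) ranges is obstructed by precisely the bundle nontriviality you acknowledge. Since strict comparison does not a priori bound $\dim X_i$ relative to $k_i$ --- the difficult AH algebras are exactly those with fast dimension growth --- the number of patches, and hence the rank error, is uncontrolled; this is the obstruction underlying \cite{T3}. It cannot be ``absorbed'' by strict comparison, because strict comparison only becomes usable \emph{after} one possesses a $b$ with $m\,d_\tau(b) < d_\tau(a) < (m+1)\,d_\tau(b)$ for every $\tau\in\mathrm{T}(A)$; producing an element whose dimension function is uniformly pinned near $d_\tau(a)/(m+\tfrac12)$ is the whole problem, not a tolerance obtained for free. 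Second, your displayed pointwise inequality is outright impossible wherever $0<\mathrm{rank}(a_i)(x)<m$, and in general the interval $[\mathrm{rank}(a_i)(x)/(m+1),\,\mathrm{rank}(a_i)(x)/m]$ need contain no integer until the rank is of order $m^2$. Third, your tracial conclusion is stated with $\leq$, whereas strict comparison requires strict inequality at every trace. The efficient repair is the paper's route: cite \cite[Theorem 5.3]{bpt} for the density statement (this is where the genuine work on the building blocks lives) and run the supremum-plus-descent argument of Proposition \ref{dense}, which also settles the $W(A)$ versus $W(A\otimes\mathcal{K})$ issue that a correct direct construction would have avoided.
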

\noindent
We proceed by proving (ii) implies (iii) for AH algebras (Corollary \ref{maincor}) and appealing to \cite[Corollary 6.7]{W}.  It should be noted that our contribution is quite modest:  we simply verify the hypotheses of the main result of \cite{W}.

\section{Strict comparison and almost divisibility}

Let $A$ be a unital C$^*$-algebra, and $\mathrm{T}(A)$ its simplex of tracial states.  Let $\mathrm{Aff}(\mathrm{T}(A))$ denote the continuous $\mathbb{R}$-valued affine functions on $\mathrm{T}(A)$, and let $\mathrm{lsc}(\mathrm{T}(A))$ denote the set of bounded lower semicontinuous strictly positive affine functions on $\mathrm{T}(A)$.  Set $\mathrm{M}_{\infty}(A) = \cup_n \mathrm{M}_n(A)$.  For positive 
$a,b \in \mathrm{M}_{\infty}(A)$, we write $a \precsim b$ if there is a sequence $(v_n)$ in $\mathrm{M}_{\infty}(A)$ such that $v_nbv_n^* \to a$ in norm (for the norm, view $\mathrm{M}_\infty(A)$ sitting naturally inside $A \otimes \mathcal{K}$).  We write $a \sim b$ if $a \precsim b$ and $b \precsim a$.  Set $W(A) = \{a \in \mathrm{M}_\infty(A) \ | \ a \geq 0 \}/\sim$, and let $\langle a \rangle$ denote the equivalence class of $a$.  $W(A)$ can be made into an ordered Abelian monoid by setting
\[
\langle a \rangle + \langle b \rangle = \langle a \oplus b \rangle \ \ \mathrm{and} \ \ \langle a \rangle \leq \langle b \rangle \Leftrightarrow a \precsim b.
\]
$W(A)$ is the original {\it Cuntz semigroup} of $A$.  

We say that $W(A)$ is {\it almost divisible} if for any $x \in W(A)$ and $n \in \mathbb{N}$, there exists $y \in W(A)$ such that 
\[
ny \leq x \leq (n+1)y.
\]
If $\tau \in \mathrm{T}(A)$, we define $d_\tau: W(A) \to \mathbb{R}^+$ by
\[
d_\tau(\langle a \rangle) = \lim_{n \to \infty} \tau(a^{1/n}).
\]
This map is known to be well-defined, additive, and order preserving, and for a fixed positive $a \in \mathrm{M}_{\infty}(A)$, $A$ simple, the map $\tau \mapsto d_\tau(a)$ belongs to $\mathrm{lsc}(\mathrm{T}(A))$.  If $a \precsim b$ whenever $d_\tau(a) < d_\tau(b)$ for every $\tau \in \mathrm{T}(A)$, then we say that $A$ has {\it strict comparison}.  

It is implicit in Conjecture \ref{conjecture} that a unital simple separable nuclear C$^*$-algebra with strict comparison of positive elements should have almost divisible Cuntz semigroup, but no general method has yet been found to establish this fact.  Positive results have been limited to particular classes of C$^*$-algebras.  Here we handle the case of AH algebras.

\begin{props}\label{dense}
Let $A$ be a unital, simple, stably finite C$^*$-algebra with strict comparison of positive elements.  Suppose that for any $f \in \mathrm{Aff}(\mathrm{T}(A))$ and $\epsilon>0$ there is positive $a \in \mathrm{M}_\infty(A)$ such that
\[
|f(\tau) - d_\tau(a)| < \epsilon, \ \forall \tau \in \mathrm{T}(A).
\]
It follows that $W(A)$ is almost divisible.
\end{props}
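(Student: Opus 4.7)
Plan: The strategy is to use strict comparison to reduce almost divisibility at $(\langle a\rangle, n)$ to constructing a positive $b \in \mathrm{M}_\infty(A)$ whose rank function satisfies $d_\tau(a)/(n+1) < d_\tau(b) < d_\tau(a)/n$ for every $\tau \in \mathrm{T}(A)$. Strict comparison then yields $n\langle b\rangle \leq \langle a\rangle \leq (n+1)\langle b\rangle$, so that $y := \langle b\rangle$ witnesses almost divisibility. I write $g(\tau) := d_\tau(a)$; this is a bounded, lsc, strictly positive affine function on the Choquet simplex $\mathrm{T}(A)$, with $m := \inf_\tau g > 0$ by simplicity. The gap $g/n - g/(n+1) = g/(n(n+1))$ is at least $m/(n(n+1)) > 0$, so the target window is uniformly open.

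To build $b$, I would approximate $g$ from below by a continuous affine function $f \in \mathrm{Aff}(\mathrm{T}(A))$ with $f \leq g$, chosen pointwise as close to $g$ as possible (on a Choquet simplex, $g$ is the pointwise supremum of such $f$'s). The density hypothesis then supplies $b \in \mathrm{M}_\infty(A)_+$ with $|d_\tau(b) - f(\tau)/(n+\tfrac12)| < \epsilon$ for any preassigned $\epsilon > 0$. Since $nf/(n+\tfrac12) = f - f/(2n+1)$ stays strictly below $g$ by a uniform margin, the lower comparison $n d_\tau(b) < g(\tau)$ holds once $\epsilon$ is small relative to $m/n$, and strict comparison then gives $nb \precsim a$.

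The reverse comparison $a \precsim (n+1)b$ is the main obstacle. Naively it requires $(n+1)f/(n+\tfrac12) > g$ pointwise, i.e., $f$ uniformly close to $g$ from below, which is impossible when $g$ is only lsc. For instance, on the trace simplex $\mathcal{M}([0,1])$ of probability measures, the lsc affine $g(\mu) := 1 + \mu((1/2,1])$ admits no continuous affine function strictly between $g/(n+1)$ and $g/n$, since continuity at $\delta_{1/2}$ (approached by $\delta_{1/2+\eta}$) would force incompatible bounds at that point. To circumvent this I would invoke R\o rdam's lemma, reducing $a \precsim (n+1)b$ to $(a-\delta)_+ \precsim (n+1)b$ for every $\delta > 0$, and then via strict comparison to $d_\tau((a-\delta)_+) < (n+1)d_\tau(b)$ for all $\tau$ and $\delta$. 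Each cut rank $d_\tau((a-\delta)_+)$ lies strictly below $g$ wherever $a$'s spectrum charges $(0,\delta]$, and thus admits a continuous affine upper bound strictly below $g$ to which the density hypothesis applies. I expect the hardest step to be arranging a single $b$ to simultaneously dominate $d_\tau((a-\delta)_+)/(n+1)$ for every $\delta > 0$, given that the convergence $d_\tau((a-\delta)_+) \nearrow g$ is merely pointwise.
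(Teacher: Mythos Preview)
Your plan has a genuine gap, and it is exactly the one you yourself flag at the end.  Producing a single $b\in\mathrm{M}_\infty(A)_+$ from the density hypothesis gives a rank function $\tau\mapsto d_\tau(b)$ that is uniformly close to a \emph{continuous} affine function.  But the R\o rdam reduction does not relax the target: requiring $(a-\delta)_+\precsim (n+1)b$ for every $\delta>0$ forces, in the limit, $d_\tau(b)\geq g(\tau)/(n+1)$ for all $\tau$, while you simultaneously need $d_\tau(b)<g(\tau)/n$.  At a genuine lsc jump of $g$ (as in your $\mathcal{M}([0,1])$ example) these inequalities compel $d_\tau(b)$ to jump as well, which is incompatible with being uniformly close to anything continuous.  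So no choice of $f$ and $\epsilon$ in the density hypothesis can produce the required $b$ in one shot, and the ``hardest step'' you anticipate is in fact an obstruction to the approach as stated.

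The paper circumvents this by not trying to hit the window $[g/(n+1),g/n)$ with a single approximant.  Instead it first shows that \emph{every} bounded $g\in\mathrm{lsc}(\mathrm{T}(A))$ is realised \emph{exactly} as $\tau\mapsto d_\tau(y)$ for some $y\in W(A\otimes\mathcal{K})$: one writes $g=\sup_i f_i$ with $f_i\in\mathrm{Aff}(\mathrm{T}(A))$ strictly increasing, uses the density hypothesis to get $a_i$ with $d_\tau(a_i)$ strictly increasing to $g$, invokes strict comparison to make $(\langle a_i\rangle)$ increasing in $W(A)$, and then takes the supremum in the completed Cuntz semigroup $\mathrm{Cu}(A)=W(A\otimes\mathcal{K})$ via \cite[Theorem~1]{cei}.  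Given $x$ and $n$, one then chooses an lsc $g$ with $ng<d_\tau(x)<(n+1)g$ (a purely order-theoretic fact about lsc affine functions), realises it as $d_\tau(y)$, and applies strict comparison.  The final step---returning from $W(A\otimes\mathcal{K})$ to $W(A)$---uses boundedness of $g$ and \cite[Theorem~4.4.1]{5guys}.  The two ingredients you are missing are thus the passage to $\mathrm{Cu}(A)$ to exploit suprema, and the hereditary-subalgebra argument that pulls the witness back into $\mathrm{M}_\infty(A)$.
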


\begin{proof}
Let $g \in \mathrm{lsc}(\mathrm{T}(A))$ be given.  Then there is a strictly increasing sequence $(f_i)$ of strictly positive functions in $\mathrm{Aff}(\mathrm{T}(A))$ with the property that $\sup_i f_i(\tau) = g(\tau)$.  The function $f_i - f_{i-1}$ is strictly positive and continuous, and so achieves a minimum value $\epsilon_i >0$ on the compact set $\mathrm{T}(A)$.  Passing to a subsequence, we may assume that $\epsilon_i < \epsilon_{i-1}$.  By hypothesis, we can find, for each $i$, a positive $a_i \in \mathrm{M}_\infty(A)$ such that
\[
|f_i(\tau)-d_\tau(a_i)|<\epsilon_{i+1}/3.
\]
It follows that $(\tau \mapsto d_\tau(a_i))_{i \in \mathbb{N}}$ is a strictly increasing sequence in $\mathrm{lsc}(\mathrm{T}(A))$ with supremum
$g$.  By strict comparison, we have $a_i \precsim a_{i+1}$, i.e., $(\langle a_i \rangle)_{i \in \mathbb{N}}$ is an increasing sequence in $W(A)$.  \cite[Theorem 1]{cei} then guarantees the existence of a supremum $y$ for this sequence in $W(A \otimes \mathcal{K}) \supseteq W(A)$.  The map $d_\tau$ is supremum preserving for each $\tau$, and we conclude that $d_\tau(y) = g(\tau), \ \forall \tau \in \mathrm{T}(A)$.  

Now let $x \in W(A)$ and $n \in \mathbb{N}$ be given, and set $h(\tau) = d_\tau(x)$ for each $\tau \in \mathrm{T}(A)$.  It is straightforward to find $g \in \mathrm{lsc}(\mathrm{T}(A))$ with the property that 
\[
ng < h < (n+1)g.
\]
We may moreover find $x \in W(A \otimes \mathcal{K})$ such that $d_\tau(x) = g(\tau)$, as in the first part of the proof.  By strict comparison, any representative $a$ for $x$ (that is, $\langle a \rangle =x$ in $W(A \otimes \mathcal{K})$) will satisfy
\[
n\langle a \rangle \precsim x \precsim (n+1)\langle a \rangle,
\]
so it remains only to prove that $a$ can be chosen to lie in $\mathrm{M}_\infty(A)$, rather than $A \otimes \mathcal{K}$.  Let $\mathbf{1_k}$ denote the unit of $\mathrm{M}_k(A)$.  Since $g$ is bounded, we have $g(\tau) < k = d_\tau(\mathbf{1_k})$ for some $k \in \mathbb{N}$ and all $\tau$.  By strict comparison, then, $\langle a \rangle$ is dominated by a Cuntz class in $W(A)$.  It then follows from \cite[Theorem 4.4.1]{5guys}  that there is positive $b \in \mathrm{M}_\infty(A)$ such that $\langle b \rangle = \langle a \rangle$, completing the proof.
\end{proof}

\begin{cors}\label{maincor}
Let $A$ be a unital simple AH algebra with strict comparison.  It follows that $A$ is $\mathcal{Z}$-stable.
\end{cors}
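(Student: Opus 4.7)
The plan is to verify the density hypothesis of Proposition \ref{dense}: given positive $f \in \mathrm{Aff}(\mathrm{T}(A))$ (the only non-trivial case) and $\epsilon > 0$, to produce a positive $a \in \mathrm{M}_\infty(A)$ with $|f(\tau) - d_\tau(a)| < \epsilon$ for every $\tau \in \mathrm{T}(A)$. Once this is done, Proposition \ref{dense} gives almost divisibility of $W(A)$; together with strict comparison and the nuclearity and separability built into any AH algebra, \cite[Corollary 6.7]{W} then delivers $\mathcal{Z}$-stability.

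Write $A = \varinjlim(A_i, \phi_{i,i+1})$ as an AH limit with each $A_i$ a finite direct sum of homogeneous blocks $p_{i,k}(C(X_{i,k}) \otimes \mathrm{M}_{n_{i,k}})p_{i,k}$, and let $\pi_i : \mathrm{T}(A) \to \mathrm{T}(A_i)$ denote restriction. Because $\{\hat{a} : a = a^* \in A\}$ is uniformly dense in $\mathrm{Aff}(\mathrm{T}(A))$ and $\bigcup_i A_i$ is norm-dense in $A$, the pullbacks $\bigcup_i \pi_i^*(\mathrm{Aff}(\mathrm{T}(A_i)))$ form a uniformly dense subspace of $\mathrm{Aff}(\mathrm{T}(A))$. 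I would first pick a large $i$ and a positive $g \in \mathrm{Aff}(\mathrm{T}(A_i))$ with $\sup_\tau |f(\tau) - g(\pi_i(\tau))| < \epsilon/2$.

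By linearity, the problem now reduces to realising a continuous positive affine function on the trace simplex of a single building block $B = p(C(X) \otimes \mathrm{M}_n)p$ (with $X$ connected and $p$ of constant rank $r$, say) as a dimension function on $\mathrm{M}_\infty(B)$. Traces on $B$ correspond to probability measures on $X$, and for a positive $b \in \mathrm{M}_N(B)$ one has $d_\tau(b) = \int_X (\mathrm{rank}(b(x))/n)\, d\mu_\tau(x)$; every positive continuous affine functional on $\mathrm{T}(B)$ is represented by integration against some continuous $h : X \to \mathbb{R}^+$. The core construction I would carry out is: given $h$, build a positive $b \in \mathrm{M}_N(B)$ (for some sufficiently large $N$) whose pointwise rank function $\mathrm{rank}(b(x))/n$ approximates $h(x)$ uniformly on $X$ to within $\epsilon/2$. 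This is done by approximating $h$ uniformly by $\sum_j c_j \chi_{V_j}$ with $c_j \in (1/n)\mathbb{N}$ and $(V_j)$ an open cover of $X$ with small overlaps, then forming $b = \sum_j f_j \otimes e_j$, where $(f_j)$ is a partition of unity subordinate to $(V_j)$ and $(e_j)$ are pairwise orthogonal projections in $\mathrm{M}_N \otimes \mathrm{M}_n$ of rank $nc_j$.

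The main obstacle lies in this last construction: since the $e_j$ are pairwise orthogonal, the rank of $b(x)$ picks up a full $nc_j$ wherever $f_j(x) > 0$, so overlaps in the partition of unity can cause the rank to exceed $nh(x)$; one handles this by refining the cover until the cumulative overshoot falls below $\epsilon$. Once $b$ is constructed, its image $a \in \mathrm{M}_N(A_i) \subseteq \mathrm{M}_\infty(A)$ satisfies $d_\tau(a) = d_{\tau|_{A_i}}(b)$ for every $\tau \in \mathrm{T}(A)$, so $d_\tau(a)$ approximates $f(\tau)$ to within $\epsilon$ uniformly on $\mathrm{T}(A)$. This completes the verification of the hypothesis of Proposition \ref{dense}, giving almost divisibility of $W(A)$, and \cite[Corollary 6.7]{W} then yields $\mathcal{Z}$-stability of $A$.
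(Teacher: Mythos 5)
Your top-level strategy is the same as the paper's: verify the approximation hypothesis of Proposition \ref{dense} and then feed strict comparison, almost divisibility, stable finiteness, and the locally finite nuclear dimension of an AH algebra into Winter's theorem. The difference is that the paper disposes of the density hypothesis in one line by citing \cite[Theorem 5.3]{bpt}, whereas you set out to re-derive it from the AH decomposition. Your reduction to a single finite stage and then to a single homogeneous block is the right skeleton, but the block-level construction as you describe it has two genuine gaps.

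First, a granularity problem. In a block $B = p(C(X)\otimes \mathrm{M}_n)p$ with $p$ of fibrewise rank $r$, the correct formula is $d_\tau(b) = \int_X \mathrm{rank}(b(x))/r\, d\mu_\tau$ (normalize by $r$, not $n$: the tracial state of the fibre $\mathrm{M}_r$ is $\frac{1}{r}\mathrm{Tr}$), so every achievable normalized rank function takes values in $\frac{1}{r}\mathbb{Z}$. If $r \leq 1/\epsilon$, no element of $\mathrm{M}_N(B)$ can approximate a generic $h$ to within $\epsilon$. You must therefore choose the stage $i$ so that \emph{every} block has rank exceeding $1/\epsilon$; that such a stage exists is a consequence of the simplicity and infinite-dimensionality of $A$ (the partial multiplicities of the connecting maps must become uniformly large), and this is precisely where simplicity enters the density argument. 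Your sketch never invokes it. Second, the overlap control is wrong as stated: refining the cover does not shrink the error, because the error must be controlled against \emph{arbitrary} traces, i.e., arbitrary Borel probability measures on $X$, and a point mass sitting in $V_{j_1}\cap V_{j_2}$ (or in $V_j\setminus\{f_j>0\}$) sees the full discrepancy no matter how fine the cover is. The standard repairs are either to take $V_j = \{x : h(x) > j/m\}$ and choose $f_j$ with $\{f_j>0\} = V_j$ exactly, so that $\mathrm{rank}(b(x))/r = \frac{1}{m}\sum_j \chi_{V_j}(x)$ lies within $1/m$ of $h(x)$ pointwise, or to use covers of multiplicity at most $\dim X + 1$ with coefficients bounded by $\epsilon/(\dim X +1)$. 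The argument can be completed along your lines, but as written this step fails; citing \cite[Theorem 5.3]{bpt}, as the paper does, is exactly the step that absorbs these difficulties.
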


\begin{proof}
$A$ is stably finite, and satisifes the hypothesis of Proposition \ref{dense} concerning the existence of suitable $a$ for each $f$ and $\epsilon$ by \cite[Theorem 5.3]{bpt}.  It therefore has strict comparison and almost divisible Cuntz semigroup.  These hypotheses, together with the fact that $A$ is simple, nuclear, separable, and has locally finite nuclear dimension allow us to appeal to \cite[Theorem 6.1]{W} and conclude that $A$ is $\mathcal{Z}$-stable.
\end{proof}

We must concede that Proposition \ref{dense} closes a gap in the proof of \cite[Theorem 1.2]{T2}.  There, we proved that the hypotheses of Propostion \ref{dense} were satisfied for a simple unital ASH algebra with slow dimension growth but neglected to explain how this guarantees almost divisibility for $W(A)$ as opposed to $W(A \otimes \mathcal{K})$.  While this could have been done in several ways, our appeal here to the recent article \cite{5guys} was the most efficient one.

\end{document}